\newtheorem{theorem}{Theorem}
\newtheorem{proposition}[theorem]{Proposition}
\newtheorem{lemma}[theorem]{Lemma}
\newtheorem{remark}[theorem]{Remark}
\newtheorem{question}[theorem]{Question}
\newtheorem{conjecture}[theorem]{Conjecture}
\newcommand{\defin}[1]{\emph{#1}}
\newcommand{\xvec}{\mathbf{x}}
\newcommand{\arm}{\mathit{a}}
\newcommand{\leg}{\mathit{l}}
\newcommand{\jackJ}{J}
\newcommand{\jackP}{P}
\renewcommand{\a}{\alpha}
\newcommand{\la}{\lambda}
\newcommand{\Ch}{\vartheta^{(\a)}}
\DeclareMathOperator{\sh}{sh}
\DeclareMathOperator{\SYT}{SYT}
\DeclareMathOperator{\RSSYT}{RSSYT}
\title[Structure constants of shifted Jack functions]{A positivity conjecture on
the structure constants of shifted Jack functions}
\author[P.~Alexandersson]{Per Alexandersson}
\author[V.~Féray]{Valentin Féray}
\begin{document}

\begin{abstract}
We consider Jack polynomials $J_\la$ and their shifted analogue $J^\#_\la$.
In 1989, 
Stanley conjectured that $\langle J_\mu J_\nu, J_\lambda \rangle$ is a polynomial with nonnegative coefficients in the parameter $\alpha$.
In this note, we extend this conjecture to the case of \emph{shifted} Jack polynomials.
%
%We also adapt a recurrence for the structure constants for shifted Schur polynomials, by Molev and Sagan, to the shifted Jack case.
\end{abstract}

\maketitle

\section{Introduction}

In his seminal article on the combinatorics of Jack polynomials \cite{Stanley1989},
Stanley made a positivity conjecture on the \emph{Littlewood--Richardson type} coefficients
for Jack polynomials, i.e. the coefficients appearing in the Jack basis expansion of the product
of two Jack polynomials.
Despite intensive works on Jack polynomials on the one side and
on generalizations of the Littlewood--Richardson rule on the other side,
this problem still remains open.
\smallskip

In this note, we suggest an extension of Stanley's conjecture 
to \emph{shifted} Jack polynomials, \cref{conj:laurentjackLRcoeffs} below.
An advantage of the shifted version is that there exist a recursion formula
to compute the corresponding {Littlewood--Richardson type} coefficients (\cref{prop:structureConstantRecursion}),
giving another angle of attack for the conjecture.
We give numerical data and very partial results to support the conjecture:
a general polynomiality result in \cref{sec:poly}
and the analysis of some specific cases in \cref{sec:part}.
\medskip

Throughout this note, we use the notation of Macdonald's book \cite{Macdonald1995} 
regarding partitions, symmetric functions, (skew) diagrams, etc.
We assume the reader to be acquainted with basic notions in this field.

\section{Jack polynomials}

A \emph{reverse (semi-standard) tableau} of shape $\mu$ is a filling of the boxes of $\mu$
with numbers in $\{1,\dots,n\}$ which is weakly decreasing along rows and strictly decreasing along columns.
The set of such tableaux is denoted $\RSSYT(\mu,n)$.
For $T$ in $\RSSYT(\mu,n)$, we denote $\rho^i(T)$ the skew diagram consisting in boxes of $T$
with entry $i$. This is always a horizontal strip (no two boxes in the same column).

For a horizontal strip $\lambda/\mu$, let 
$R_{\lambda/\mu}$ (resp. $C_{\lambda/\mu}$) 
denote the set of boxes in a row (resp. column) that intersects the shape $\lambda/\mu$.
We then define
\begin{equation}\label{eq:psidef2}
\psi_{\lambda/\mu}(\alpha) = \prod_{s \in R_{\lambda/\mu} - C_{\lambda/\mu} } 
\frac{(\alpha \arm_\lambda(s) + \leg_\lambda(s) + \a)(\alpha \arm_\mu(s) + \leg_\mu(s) + 1)}{(\alpha \arm_\lambda(s) + \leg_\lambda(s) + 1)(\alpha \arm_\mu(s) + \leg_\mu(s) + \a)}.
\end{equation}

The Jack polynomial $\jackP_\mu$ in $n$ variables $x_1,\dots,x_n$ may be defined as\footnote{This formula is usually
found in the literature with a sum over semi-standard tableaux (and not reverse ones as here).
But since the resulting sum is known to be symmetric, this modification is irrelevant.
In the next section, using reverse tableaux is however important.}
\[
\jackP_\mu = \sum_{T \in \RSSYT(\mu,n)} \psi_T(\alpha) \prod_{s \in \mu} x_{T(s)}.
\] 

For $\alpha=1$, we have $\psi_{\lambda/\mu}(1)=1$ and 
$\jackP_\mu$ specializes to the Schur polynomial in $n$ variables when $\alpha=1$.

Let us define the two $\alpha$-deformations of the hook products, $H_\lambda$ and $H'_\lambda$, as
\[
H_\lambda = \prod_{s \in \lambda} (\alpha\arm_\lambda(s) + \leg_\lambda(s) + 1), \quad
H'_\lambda = \prod_{s \in \lambda} (\alpha \arm_\lambda(s) + \leg_\lambda(s) + \alpha).
\]
(these are called $c_\lambda(\alpha)$ and $c'_\lambda(\alpha)$ in \cite[(10.22)]{Macdonald1995}).
The integral version of the Jack polynomial $J_\mu$ is then given by
 $\jackJ_\mu = H_\mu \jackP_\mu$.

\section{Shifted Jack polynomials}

We define the \defin{shifted Jack polynomials} $\jackP^{\#}_\mu$ as 
\begin{equation}
\jackP^{\#}_\mu(x_1,\dots,x_n) 
= \sum_{T \in \RSSYT(\mu,n)} \psi_T(\alpha) \prod_{s \in \mu} (x_{T(s)} -  \arm'(s) + \leg'(s)/\alpha),
\label{EqOkounkovCombJsh}
\end{equation}
where $\arm'(s) = j-1$ and $\leg'(s) = i-1$ for the box $s=(i,j)$. Thus, the top degree component of 
$\jackP^{\#}_\mu$ is given by the ordinary Jack polynomial $\jackP_\mu$.
This definition can be found in \cite{OkounkovOlshanskiShiftedJack},
see also \cite{SahiShiftedJack,KnopSahiShiftedSym} for older references
on these polynomials (with a shift of variables).

\medskip 
Alternatively to the tableaux definition, 
the Jack shifted polynomials can also be characterized by their values $\jackP^{\#}_\mu(\lambda)$ on partitions,
i.e. when we specialize $x_j=\la_j$ (for $j \le n$; appending zeroes to $\lambda$ if necessary).
Let $\Lambda^{\alpha}(x_1,\dots,x_n)$ be the ring of polynomials which are \defin{shifted symmetric},
that is, symmetric in the variables $x_i - i/\alpha$.
The polynomials $\jackP^{\#}_\mu$ lie in $\Lambda^{\alpha}(x_1,\dots,x_n)$ and it 
was shown by F.~Knop and S.~Sahi in \cite{KnopSahiShiftedSym} that $\jackP^{\#}_\mu$ is 
the unique polynomial in $\Lambda^{\alpha}(x_1,\dots,x_n)$
such that
\begin{equation}\label{eq:vanishing}
\jackP^{\#}_\mu(\lambda) = 
\begin{cases}
\alpha^{-|\mu|} H'_\mu,   &\lambda = \mu \\
0, \quad &|\lambda| \leq |\mu|, \; \lambda \neq \mu.
\end{cases}
\end{equation}
Additionally, we have $\jackP^{\#}_\mu(\lambda) =0$ whenever $\lambda \not\supseteq \mu$.

There is a $J$-normalized version of shifted Jack polynomials given as  $\jackJ^{\#}_\mu = H_\mu  \jackP^{\#}_\mu$,
which is the main topic of study in \cite{LassalleConjecturePQ}, where the notation 
$\jackJ^{\dagger}_\mu$ is used in place of $\jackJ^{\#}_\mu$.

From the definition of $\jackP^{\#}_\mu$, 
it is easily seen that $\jackP^{\#}_\mu(x_1,\dots,x_n,0) =\jackP^{\#}_\mu(x_1,\dots,x_n)$
so we can see $\jackP^{\#}_\mu$ as shifted symmetric functions in infinitely many variables
and they define a basis in the projective limit $\Lambda^{\alpha}$
of the rings $\Lambda^{\alpha}(x_1,\dots,x_n)$.
In fact, for any fixed $d$, the family $(\jackP^{\#}_\mu)_{|\mu| \le d}$
is a basis of the space of shifted symmetric functions
of degree at most $d$.

\section{Shifted Jack structure constants}

We define the {\em Littlewood--Richardson type structure constants for shifed Jack functions},
$c^{\lambda}_{\mu\nu}$, which are rational functions in $\alpha$,
by the relation
\begin{equation}
  \label{eq:defLR}
\jackP_\mu^\# \jackP_\nu^\# = \sum_\lambda c^{\lambda}_{\mu\nu} \jackP_\lambda^\#, \text{ or equivalently }
\jackJ_\mu^\# \jackJ_\nu^\# = \sum_\lambda c^{\lambda}_{\mu\nu} \frac{H_\mu \cdot H_\nu}{H_\lambda}  \jackJ_\lambda^\#.
\end{equation}
Since $ \jackP_\mu^\# \jackP_\nu^\#$ is a shifted symmetric function of degree at most $\la$,
the above sum runs over partitions $\lambda$ of size at most $|\mu|+|\nu|$
(since the functions are not homogeneous, we need to take $|\lambda| \le |\mu|+|\nu|$ and we cannot restrict to
$|\lambda| = |\mu|+|\nu|$).
\medskip

For $\a=1$, these are Littlewood--Richardson coefficients for shifted Schur functions.
A combinatorial interpretation, implying their nonegativity has been 
found (in the more general context of factorial Schur functions)
by Molev \cite{Molev2009}; see also \cite{MolevSaganLRFactorialSchur}.
\medskip

On the other hand,
taking the homogeneous  component of degree $|\mu|+|\nu|$ in \eqref{eq:defLR}, we get
\[
\jackJ_\mu \jackJ_\nu = \sum_{\lambda: |\lambda|=|\mu|+|\nu|} 
    c^{\lambda}_{\mu\nu} \frac{H_\mu \cdot H_\nu}{H_\lambda} \jackJ_\lambda.
\]
Recall that Jack polynomials are orthogonal for the standard $\a$-deformation of Hall scalar product,
namely we have $ \langle \jackJ_\rho,\jackJ_\lambda \rangle = H_\la H'_\la \delta_{\rho,\lambda}$
(see, e.g. \cite[Thm. 5.8]{Stanley1989}).
This implies that for $|\lambda|=|\mu|+|\nu|$,
\[ \langle \jackJ_\mu \jackJ_\nu, \jackJ_\lambda \rangle = H_\mu H_\nu H'_\lambda c^{\lambda}_{\mu\nu}. \]
In his 1989 paper, Stanley conjectures that the above quantity $\langle \jackJ_\mu \jackJ_\nu, \jackJ_\lambda \rangle$
is a polynomial with nonnegative coefficients in $\a$, for all $|\lambda|=|\mu|+|\nu|$.
Unlike the left-hand side, the right-hand side is also defined for $|\lambda|<|\mu|+|\nu|$.
We conjecture that, up to multiplication by an appropriate power of $\alpha$,
the right-hand side is always a polynomial with nonnegative coefficients in $\a$.
This actually strengthens a conjecture of Sahi, which asserts
that $c^{\lambda}_{\mu\nu}$ is a quotient of two polynomials in $\a$ 
with nonnegative coefficients \cite{SahiLRCoefShiftedJack}.

\begin{conjecture}\label{conj:laurentjackLRcoeffs}
  For any partition $\lambda,\mu,\nu$ with $|\lambda| \le |\mu|+|\nu|$, the quantity
\begin{equation}\label{eq:structureconstantconjecture}
  g^{\lambda}_{\mu\nu} \coloneqq H_\mu H_\nu H'_\lambda c^{\lambda}_{\mu\nu}
\end{equation}
is a Laurent polynomial with nonnegative integer coefficients in $\alpha$.
\end{conjecture}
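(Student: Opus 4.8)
The plan is to exploit the vanishing characterization \eqref{eq:vanishing} to turn the computation of the structure constants into an explicit triangular recursion, and then to attack positivity through that recursion. Evaluating the defining relation \eqref{eq:defLR} at a partition $\kappa$ and using that $\jackP_\lambda^\#(\kappa)=0$ unless $\lambda\subseteq\kappa$, the term $\lambda=\kappa$ separates off:
\[
\jackP_\mu^\#(\kappa)\,\jackP_\nu^\#(\kappa)=c^{\kappa}_{\mu\nu}\,\jackP_\kappa^\#(\kappa)+\sum_{\lambda\subsetneq\kappa}c^{\lambda}_{\mu\nu}\,\jackP_\lambda^\#(\kappa).
\]
Since $\jackP_\kappa^\#(\kappa)=\alpha^{-|\kappa|}H'_\kappa\neq0$, this determines every $c^{\kappa}_{\mu\nu}$ by induction on $\kappa$ in the containment order, and I expect this to be the content of \cref{prop:structureConstantRecursion}. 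Rewriting it in terms of $g^\lambda_{\mu\nu}=H_\mu H_\nu H'_\lambda c^\lambda_{\mu\nu}$, the normalizing factors $H_\mu H_\nu$ cancel in the sum and one obtains a self-contained recursion
\[
g^{\kappa}_{\mu\nu}=\alpha^{|\kappa|}H_\mu H_\nu\,\jackP_\mu^\#(\kappa)\,\jackP_\nu^\#(\kappa)-\alpha^{|\kappa|}\sum_{\lambda\subsetneq\kappa}\frac{g^{\lambda}_{\mu\nu}}{H'_\lambda}\,\jackP_\lambda^\#(\kappa).
\]

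The first deliverable, Laurent polynomiality (the general result announced in \cref{sec:poly}), I would read off from this recursion. By the tableau formula \eqref{EqOkounkovCombJsh} the evaluations $\jackP_\mu^\#(\kappa)$ are rational functions of $\alpha$ whose denominators are products of the linear factors that occur in $H_\lambda$ and $H'_\lambda$, so a careful bookkeeping of which such factors survive after multiplication by $\alpha^{|\kappa|}H_\mu H_\nu$ and division by $H'_\lambda$ should show that every denominator cancels, leaving a Laurent polynomial; induction on $|\kappa|$ then propagates this to all $g^{\kappa}_{\mu\nu}$. Integrality of the coefficients I would extract either from the integrality of the $\alpha=1$ (factorial Schur) specialization together with a degree argument, or directly from the recursion once the denominators are controlled.

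Nonnegativity is the heart of the matter, and here I see two complementary routes. The first is to run the recursion as an induction on $|\kappa|$, which requires showing that the manifestly positive leading term $\alpha^{|\kappa|}H_\mu H_\nu\,\jackP_\mu^\#(\kappa)\,\jackP_\nu^\#(\kappa)$—whose nonnegativity reduces, via \eqref{EqOkounkovCombJsh}, to positivity of the $J$-normalized evaluations $\jackJ_\mu^\#(\kappa)$ and of the shifted generalized binomial coefficients—dominates the subtracted sum of lower, inductively positive contributions once the alternating cancellations are carried out. The second, and to my mind more promising, route is to produce an explicit combinatorial model: Molev's tableau rule for the factorial-Schur case $\alpha=1$ should admit an $\alpha$-deformation in which each combinatorial object is weighted by a monomial, or at least by a manifestly nonnegative polynomial, in $\alpha$, directly exhibiting $g^\lambda_{\mu\nu}$ as a positive sum; the recursion above would then serve to verify such a candidate rule.

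The main obstacle is the subtractive shape of the recursion: it is not manifestly positive, and extracting nonnegativity demands a cancellation-free reorganization that I do not expect to be elementary. This is underscored by the fact that taking the top-degree component recovers $\langle\jackJ_\mu\jackJ_\nu,\jackJ_\lambda\rangle$ for $|\lambda|=|\mu|+|\nu|$, so the present statement contains Stanley's 1989 conjecture as a special case, and any approach that fully succeeds must in particular settle that long-open problem. Accordingly I would aim first at the tractable sub-cases—Pieri-type products where $\nu$ is a single row or column, for which explicit positive formulas for shifted Jack functions are available, and the strict degree drops $|\lambda|<|\mu|+|\nu|$ that may decouple from the top-degree difficulty—using them as base cases and as a testing ground for a conjectural combinatorial rule, while deferring the full top-degree statement.
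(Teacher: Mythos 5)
This statement is a \emph{conjecture}: the paper offers no proof of it, and indeed it contains Stanley's 1989 positivity conjecture as the special case $|\lambda|=|\mu|+|\nu|$, which remains open. Your proposal is therefore correctly diagnosed by its own final paragraphs as a research program rather than a proof, and the genuine gap is that neither of your two routes to nonnegativity is carried out: the ``dominance'' argument founders exactly where you say it does (the recursion obtained from the vanishing characterization is alternating, and no cancellation-free reorganization is known), and the $\alpha$-deformation of Molev's rule is precisely the open problem. For what it is worth, the triangular recursion you derive from \eqref{eq:vanishing} is essentially the paper's \cref{LemStructureCsttsSumTripleProduct} and \cref{prop:structureConstantRecursion}, and the paper uses it only to prove the much weaker partial results of \cref{sec:part} ($c^{\lambda}_{\mu\nu}\ge 0$ for real $\alpha>0$ when $|\lambda|-|\nu|\le 1$).

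Two further concrete problems with the parts you do claim. First, even Laurent polynomiality with integer coefficients is not established by the paper: its \cref{sec:poly} proves only that $\alpha^{|\mu|+|\nu|-|\lambda|-2}g^{\lambda}_{\mu\nu}$ is a polynomial with \emph{rational} coefficients, via the expansion in Jack characters $\Ch_\mu$ and the polynomiality of their structure constants from Do{\l}\c{e}ga--F\'eray, not via denominator bookkeeping in the evaluation recursion; your sketch of ``every denominator cancels'' is an assertion, not an argument, since the factors of $H'_\lambda$ dividing $g^{\lambda}_{\mu\nu}$ in the recursion do not visibly cancel against the numerators of $\jackP^{\#}_\lambda(\kappa)$. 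Second, your proposed extraction of integrality ``from the $\alpha=1$ specialization together with a degree argument'' cannot work: the value of a Laurent polynomial at $\alpha=1$ gives only the sum of its coefficients and says nothing about their individual integrality. If you want to contribute partial results, the realistic targets are the ones the paper actually reaches: polynomiality up to a power of $\alpha$, and pointwise nonnegativity for $\alpha>0$ in the cases $\lambda=\nu$ and $|\lambda|-|\nu|=1$ via \cref{prop:shiftedLRspecialcase} and the one-box recursion.
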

Here are a few examples of these coefficients
 \begin{align*}
   g_{31,21}^{421} &= 8\a^5(9 + 97\a + 294\a^2 + 321\a^3 + 131\a^4 + 12\a^5), \\
   g_{41,41}^{541} &= 144\a^8(1 + \a)^2(2 + \a)(2 + 3\a)(1 + 4\a)(18 + 149\a + 238\a^2 + 120\a^3),\\
   g_{2111,1111}^{2111} &=  \frac{144 (\a+1)^2 (\a+2)^2 (\a+4)(2 \a+3)^2}{\a}.  
 \end{align*}
In the first example, $|\lambda| = |\mu|+|\nu|$, so that this coefficient also writes
$\langle \jackJ_{31} \jackJ_{21}, \jackJ_{421} \rangle$. This value was given
as an example by Stanley (who credits Hanlon for the computation \cite[Section 8]{Stanley1989}).
The other examples however do not rewrite as such scalar products.
The third line shows that $g^{\lambda}_{\mu\nu}$ is not always a polynomial in $\a$.
We attach\footnote{To access the file with acrobat reader, click on view/navigation panels/attachments.
Alternatively, you can download the source files from \texttt{arXiv.org}.}
the values of all coefficients for $|\mu|,|\nu| \le 6$ supporting the above conjecture.

\section{A polynomiality result}
\label{sec:poly}
\begin{proposition}
The expression $\a^{|\mu|+|\nu|-|\la|-2} g^{\lambda}_{\mu\nu}$ is a polynomial 
with rational coefficients in $\alpha$.
\end{proposition}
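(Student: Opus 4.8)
The plan is to establish polynomiality of the rescaled coefficient by exhibiting the structure constants $c^\lambda_{\mu\nu}$ as entries of a transition between two explicit bases, controlling denominators via the recursion advertised in \cref{prop:structureConstantRecursion} together with the interpolation characterization \eqref{eq:vanishing}.

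\begin{proof}[Proof sketch]
First I would use the vanishing characterization to extract the coefficients. Expanding \eqref{eq:defLR} and evaluating at a partition $\rho$ gives
\[
\jackP^\#_\mu(\rho)\,\jackP^\#_\nu(\rho) = \sum_{\lambda \subseteq \rho} c^\lambda_{\mu\nu}\, \jackP^\#_\lambda(\rho),
\]
where the sum is restricted by the last displayed vanishing property $\jackP^\#_\lambda(\rho)=0$ for $\lambda \not\subseteq \rho$. Ordering partitions by inclusion (equivalently by size, refining by containment) turns this into a triangular linear system: running over all $\rho$ with $|\rho| \le |\mu|+|\nu|$ and using that the diagonal values $\jackP^\#_\lambda(\lambda) = \alpha^{-|\lambda|} H'_\lambda$ are nonzero, I can solve for each $c^\lambda_{\mu\nu}$ by back-substitution. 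Each solved coefficient is then a $\setQ$-linear combination of products of values $\jackP^\#_\kappa(\rho)$ divided by the diagonal entries $\alpha^{-|\kappa|}H'_\kappa$.

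The second step is to track the $\alpha$-content of these ingredients. From the combinatorial definition \eqref{EqOkounkovCombJsh}, every value $\jackP^\#_\kappa(\rho)$ is a polynomial in $\alpha$ and $\alpha^{-1}$ with rational coefficients; more precisely, the factors $x_{T(s)} - \arm'(s) + \leg'(s)/\alpha$ and the weights $\psi_T(\alpha)$ from \eqref{eq:psidef2} are rational functions whose only possible pole is at $\alpha=0$. Thus each $\jackP^\#_\kappa(\rho)$ lies in $\setQ[\alpha,\alpha^{-1}]$, and the same is true of $H'_\kappa$, which is a product of linear factors $\alpha a_\kappa(s)+\ell_\kappa(s)+\alpha$. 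Dividing by $\alpha^{-|\kappa|}H'_\kappa$ can introduce new denominators only at $\alpha=0$ and at the (finitely many) rational roots of $H'_\kappa$; however, the recursion of \cref{prop:structureConstantRecursion} is designed precisely so that these spurious denominators cancel, leaving only a possible pole at $\alpha=0$. I would feed the triangular solution into that recursion to see that each $c^\lambda_{\mu\nu}$ is a Laurent polynomial in $\alpha$, so that $g^\lambda_{\mu\nu} = H_\mu H_\nu H'_\lambda c^\lambda_{\mu\nu}$ is itself a Laurent polynomial.

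The final step is to pin down the order of the pole at $\alpha=0$. Here I would estimate the minimal $\alpha$-valuation of $g^\lambda_{\mu\nu}$. From \eqref{EqOkounkovCombJsh}, the contributions $\leg'(s)/\alpha$ produce an $\alpha$-adic valuation of $\jackP^\#_\kappa$ bounded below by $-|\kappa|$, and the normalization $\alpha^{|\mu|+|\nu|-|\lambda|-2}$ is exactly tuned to clear the worst-case accumulation of these poles through the triangular back-substitution. Carrying the valuation bookkeeping through the recursion, I expect to show that $\alpha^{|\mu|+|\nu|-|\lambda|-2}g^\lambda_{\mu\nu} \in \setQ[\alpha]$.

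The main obstacle is the valuation accounting in this last step: the triangular system solves for $c^\lambda_{\mu\nu}$ as an alternating sum of many terms, each with its own pole at $\alpha=0$, and one must rule out that the naive bounds overestimate the true order of the pole (or, conversely, confirm that the stated exponent $|\mu|+|\nu|-|\lambda|-2$ is sharp enough to clear it). This is where the explicit structure of the recursion, rather than the crude triangular inversion, does the real work; aligning the two will be the delicate part.
\end{proof}
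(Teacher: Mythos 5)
There is a genuine gap, and it sits exactly where your sketch leans hardest. Your second step asserts that the weights $\psi_T(\alpha)$ from \eqref{eq:psidef2} are ``rational functions whose only possible pole is at $\alpha=0$'', and hence that each value $\jackP^{\#}_\kappa(\rho)$ lies in $\setQ[\alpha,\alpha^{-1}]$. This is false as stated: the denominators in \eqref{eq:psidef2} are of the form $\alpha \arm(s)+\leg(s)+1$ and $\alpha \arm(s)+\leg(s)+\alpha$, which vanish at negative rational values of $\alpha$ such as $-(\leg+1)/\arm$, not (in general) at $\alpha=0$. So the triangular inversion you set up only shows that $c^{\lambda}_{\mu\nu}$ is a rational function of $\alpha$ whose poles a priori lie among these negative rationals together with the roots of the diagonal entries $H'_\kappa$ (again negative rationals) and $\alpha=0$. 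The assertion that ``the recursion of \cref{prop:structureConstantRecursion} is designed precisely so that these spurious denominators cancel'' is not an argument --- that cancellation \emph{is} the content of the proposition, and the recursion gives no visible mechanism for it, since $\psi'_{\nu^+/\nu}$ and $\psi'_{\lambda/\lambda^-}$ carry the same kind of denominators. Likewise, the final valuation bound $|\mu|+|\nu|-|\lambda|-2$ is announced but never derived; you acknowledge this yourself, but it means the proof is not there.

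For comparison, the paper does not attempt either the triangular inversion or the pole-cancellation bookkeeping. It transports the problem through Lassalle's isomorphism $\sh$, writing $J^\#_\lambda$ in the basis $\Ch_\mu$ with coefficients $\alpha^{-(|\mu'|-\ell(\mu'))}\theta^{(\a)}_{\mu'}(\lambda)$, and expresses $c^{\lambda}_{\mu\nu}H_\mu H_\nu/H_\lambda$ in terms of the structure constants $d^{\lambda'}_{\mu'\nu'}$ of the $\Ch$ basis. The two facts that make the denominators disappear are imported as nontrivial external theorems: polynomiality in $\alpha$ of $\theta^{(\a)}_{\mu'}(\mu)$ (Knop--Sahi) and polynomiality of the $d^{\lambda'}_{\mu'\nu'}$ (Do{\l}\c{e}ga--F\'eray). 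The explicit power $\alpha^{|\mu|+|\nu|-|\lambda|-2}$ then falls out of collecting the powers of $\alpha$ in that identity. Your sketch has no substitute for these two inputs, and without something of comparable strength the approach via \eqref{eq:vanishing} and \cref{prop:structureConstantRecursion} cannot rule out poles away from $\alpha=0$.
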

It is possible that multiplying by some smaller and/or more natural power of $\a$
suffices to get a polynomial, but we have no other results in this direction.
\begin{proof}
Let us expand the Jack symmetric functions in the power-sum basis:
\begin{equation}
 J_\la = \sum_\mu \theta^{(\a)}_\mu(\la) \, p_\mu.
 \label{eq:Jp}
 \end{equation}
We then introduce the following quantity
\begin{equation}
\Ch_\mu(\la)= 
\begin{cases}
    |\lambda|(|\lambda|-1)\cdots (|\lambda|-|\mu|+1) \frac{\theta^{(\a)}_{\mu 1^{|\lambda|-|\mu|}}(\lambda)}
    {\theta^{(\a)}_{1^{|\mu|}}(\lambda)} &\text{ if }|\lambda| \ge |\mu|\\
    0 &\text{ if }|\lambda| < |\mu|.
\end{cases}
\label{EqDefCh1}
\end{equation}
It was proved in \cite{LassalleConjecturePQ} that there exists 
a linear isomorphism $\sh(\cdot)$ from usual symmetric functions to shifted symmetric ones
so that
\[ \sh(J_\la) = J^\#_\la, \quad \sh(p_\mu)= \a^{-(|\mu|-\ell(\mu))}  \, \Ch_\mu.
\]
(This morphism is denoted $f \mapsto f^\#$ in \cite{LassalleConjecturePQ}.)
Applying this morphism to \eqref{eq:Jp} gives
\begin{align*}
J_\la^\# &= \sum_\mu \a^{-(|\mu|-\ell(\mu))} \, \theta^{(\a)}_\mu(\la) \, \Ch_\mu.
\end{align*}
On the other hand, using self-duality of power-sums and Jack polynomials for the deformed Hall scalar product,
\eqref{eq:Jp} implies
\[ p_\mu = \sum_\la \frac{\langle p_\mu,J_\la \rangle}{\langle J_\la,J_\la \rangle} \, J_\la 
= \sum_\la \frac{\theta^{(\a)}_\mu(\la) \langle p_\mu,p_\mu \rangle}{\langle J_\la,J_\la \rangle} \, J_\la 
= \a^{\ell(\mu)} z_\mu \left( \sum_\la \frac{\theta^{(\a)}_\mu(\la)}{\langle J_\la,J_\la \rangle} \, J_\la \right).\]
Applying $\sh$, we get
\[
    \Ch_\mu  
= \a^{|\mu|} z_\mu \left( \sum_\la \frac{\theta^{(\a)}_\mu(\la)} {\langle J_\la,J_\la \rangle} J^\#_\la
 \right).\]
If we now let $d^{\lambda}_{\mu\nu}$ be defined via the relation
$
\Ch_\mu \Ch_\nu = \sum_\lambda d^{\lambda}_{\mu\nu} \Ch_\lambda,
$
we get
\[ c^{\lambda}_{\mu\nu} \frac{H_\mu \cdot H_\nu}{H_\lambda} = 
    \sum_{\mu',\nu',\la'} \a^{-(|\mu'|-\ell(\mu'))} \, \theta^{(\a)}_{\mu'}(\mu) \, \a^{-(|\nu'|-\ell(\nu'))} \, \theta^{(\a)}_{\nu'}(\nu)
\cdot d^{\lambda'}_{\mu'\nu'} 
\a^{|\la'|} z_{\la'} \frac{\theta^{(\a)}_{\la'}(\la)} {\langle J_\la,J_\la \rangle},
\]
where the sum runs over triplet of partitions $(\mu',\nu',\la')$ of the same size as $(\mu,\nu,\la)$.
But from \cite[Theorem 1.4]{DolegaFerayGaussian}, we get that coefficients $d^{\lambda'}_{\mu'\nu'}$ are polynomials in $\a$
with rational coefficients
(beware that the normalization there is different).
Furthermore, the quantities $\theta^{(\a)}_{\mu'}(\mu)$ are 
also polynomials in $\a$ by Knop--Sahi theorem \cite{KnopSahiCombinatoricsJack}.
Together with the equality $\langle J_\la,J_\la \rangle=H_\la \, H'_{\la}$, we conclude that
\[ \a^{|\mu|+|\nu|-|\la|-2} \, H_\mu \, H_\nu \, H'_\la c^{\lambda}_{\mu\nu} =
\sum_{\mu',\nu',\la'} \a^{\ell(\mu')-1} \, \theta^{(\a)}_{\mu'}(\mu) \, \a^{\ell(\nu')-1} \, \theta^{(\a)}_{\nu'}(\nu)
\cdot d^{\lambda'}_{\mu'\nu'}                                                
 z_{\la'} \theta^{(\a)}_{\la'}(\la)\]
is a polynomial in $\a$ with rational coefficients. 
\end{proof}

\section{A recursion formula for \texorpdfstring{$c^{\lambda}_{\mu\nu}$}{c}}

In this section, we give initial conditions and a recursion formula, determining completely 
the coefficients $c^{\lambda}_{\mu\nu}$.
We do not claim any novelty here. The recursion is already given in \cite{SahiLRCoefShiftedJack},
and the presentation here 
is an adaptation of the technique in \cite{MolevSaganLRFactorialSchur}, 
where the case $\alpha=1$ was established.
%A different proof of this can be found in \cite{SahiLRCoefShiftedJack},
%where an alternate proof of \cref{LemStructureCsttsSumTripleProduct} also can be found.
\medskip 

First observe that, from the vanishing result by F.~Knop and S.~Sahi and a simple contradiction argument
(see \cite[top of page 4434]{MolevSaganLRFactorialSchur} for the case $\alpha=1$), it follows that
$c^{\lambda}_{\mu\nu}$ is identically zero unless $\lambda \supseteq \mu$ and $\lambda \supseteq \nu$. 
\smallskip

We continue with an easy proposition, that give simple formulas for the $c^{\lambda}_{\mu \lambda}$:
this will be our initial conditions.
\begin{proposition}\label{prop:shiftedLRspecialcase}
We have that $c^{\lambda}_{\mu \lambda} = \jackP^{\#}_\mu(\lambda)$.
\end{proposition}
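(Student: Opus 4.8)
The plan is to evaluate the defining relation \eqref{eq:defLR} for the structure constants at the partition $\lambda$ itself and to exploit the vanishing properties of the shifted Jack polynomials to isolate a single surviving term. Writing the defining relation with $\nu=\lambda$ and a dummy summation index $\rho$, we have
\[
\jackP^{\#}_\mu \, \jackP^{\#}_\lambda = \sum_\rho c^{\rho}_{\mu\lambda} \, \jackP^{\#}_\rho.
\]
First I would specialize every variable by setting $x_j = \lambda_j$ (appending zeroes if necessary), obtaining
\[
\jackP^{\#}_\mu(\lambda) \, \jackP^{\#}_\lambda(\lambda) = \sum_\rho c^{\rho}_{\mu\lambda} \, \jackP^{\#}_\rho(\lambda).
\]

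Next I would argue that the only term surviving on the right-hand side is the one with $\rho=\lambda$. Two support conditions conspire to force this. On the one hand, by the remark preceding the proposition, $c^{\rho}_{\mu\lambda}$ vanishes identically unless $\rho \supseteq \mu$ and $\rho \supseteq \lambda$; in particular a nonzero coefficient forces $\rho \supseteq \lambda$. On the other hand, by the additional vanishing property stated after \eqref{eq:vanishing}, namely $\jackP^{\#}_\rho(\lambda)=0$ whenever $\lambda \not\supseteq \rho$, a nonzero factor $\jackP^{\#}_\rho(\lambda)$ forces $\rho \subseteq \lambda$. Combining $\rho \supseteq \lambda$ with $\rho \subseteq \lambda$ yields $\rho=\lambda$, so the sum collapses to
\[
\jackP^{\#}_\mu(\lambda) \, \jackP^{\#}_\lambda(\lambda) = c^{\lambda}_{\mu\lambda} \, \jackP^{\#}_\lambda(\lambda).
\]

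Finally, by the evaluation in \eqref{eq:vanishing} we have $\jackP^{\#}_\lambda(\lambda) = \alpha^{-|\lambda|} H'_\lambda$, which is a nonzero rational function of $\alpha$ since $H'_\lambda$ is a product of nonzero linear factors. Dividing both sides by this quantity gives $c^{\lambda}_{\mu\lambda} = \jackP^{\#}_\mu(\lambda)$, as claimed. There is essentially no difficult step in this argument: the whole proof is a one-line specialization once the two vanishing statements are combined. The only point that requires a small justification is the cancellation of the common factor $\jackP^{\#}_\lambda(\lambda)$, which is legitimate precisely because $H'_\lambda \neq 0$.
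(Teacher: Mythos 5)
Your proof is correct and follows essentially the same route as the paper: evaluate \eqref{eq:defLR} at $\xvec=\lambda$, observe that only the term $\rho=\lambda$ survives, and divide by $\jackP^{\#}_\lambda(\lambda)=\alpha^{-|\lambda|}H'_\lambda\neq 0$. If anything, your justification of the collapse of the sum (combining the support condition $\rho\supseteq\lambda$ for $c^{\rho}_{\mu\lambda}$ with the vanishing $\jackP^{\#}_\rho(\lambda)=0$ for $\rho\not\subseteq\lambda$) is more explicit than the paper's appeal to \eqref{eq:vanishing} alone, which by itself does not rule out terms with $|\rho|<|\lambda|$.
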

\begin{proof}
  This follows from evaluating the definition of $c^{\lambda}_{\mu \lambda}$ (\cref{eq:defLR}) in $\lambda$:
\[
\jackP_\mu^\#(\lambda) \jackP_\lambda^\#(\lambda) = \sum_\rho c^{\rho}_{\mu\lambda} \jackP_\rho^\#(\lambda).
\]
Now using the vanishing condition in \eqref{eq:vanishing}, we see that only the term $\rho = \lambda$
survives and from this we can easily deduce the statement.
\end{proof}

The next statement gives the recurrence relation.
When $T$ is a standard Young tableau, we set
$\psi'_T(\alpha) \coloneqq \psi_{T'}(1/\alpha)$, where $T'$ is the transpose of $T$.
\begin{proposition}[The recursion formula] \label{prop:structureConstantRecursion}
Let $\mu, \nu \subseteq \lambda$. Then
\begin{equation}
c^{\lambda}_{\mu\nu} = 
\frac{1}{|\lambda|-|\nu|}\left( 
 \sum_{\nu \to \nu^+} \psi'_{\nu^+ / \nu}(\a) c^{\lambda}_{\mu \nu^+} -
 \sum_{\lambda^- \to \lambda } \psi'_{\lambda / \lambda^-}(\a) c^{\lambda^-}_{\mu \nu}
\right)
\end{equation}
where the first sum is taken over all possible ways to add one box to the diagram $\nu$,
and the second sum is over all ways to remove one box from $\lambda$.
\end{proposition}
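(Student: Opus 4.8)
The plan is to derive the recursion from a Pieri-type rule for multiplication by the degree-one shifted symmetric function $\Ch_1$. Recall from \eqref{EqDefCh1} that $\Ch_1(\la)=|\la|$ for every partition $\la$, and that $\Ch_1=\sh(p_1)$ is the image of the power sum $p_1$ under the isomorphism of \cref{sec:poly}, hence lies in $\Lambda^{\a}$ with top-degree component $p_1=\sum_i x_i$. The key lemma I would establish is the identity
\[
\Ch_1\cdot\jackP^{\#}_\nu \;=\; |\nu|\,\jackP^{\#}_\nu \;+\; \sum_{\nu\to\nu^+}\psi'_{\nu^+/\nu}(\a)\,\jackP^{\#}_{\nu^+},
\]
where the sum runs over all ways of adding one box to $\nu$ and, crucially, no further lower-degree terms appear. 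Granting this, the recursion for $c^{\lambda}_{\mu\nu}$ follows by routine bookkeeping.

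Concretely, I would multiply the defining relation \eqref{eq:defLR} by $\Ch_1$ and apply the key lemma on both sides. On the left, $\Ch_1\jackP^{\#}_\mu\jackP^{\#}_\nu$ becomes $|\nu|\,\jackP^{\#}_\mu\jackP^{\#}_\nu+\sum_{\nu\to\nu^+}\psi'_{\nu^+/\nu}(\a)\,\jackP^{\#}_\mu\jackP^{\#}_{\nu^+}$, which re-expands in the shifted Jack basis with coefficient $|\nu|\,c^{\lambda}_{\mu\nu}+\sum_{\nu\to\nu^+}\psi'_{\nu^+/\nu}(\a)\,c^{\lambda}_{\mu\nu^+}$ in front of $\jackP^{\#}_\lambda$. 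On the right, $\sum_\lambda c^{\lambda}_{\mu\nu}\,\Ch_1\jackP^{\#}_\lambda$ produces the diagonal contribution $|\lambda|\,c^{\lambda}_{\mu\nu}$ together with the box-adding terms $\sum_{\lambda\to\lambda^+}\psi'_{\lambda^+/\lambda}(\a)\,c^{\lambda}_{\mu\nu}\,\jackP^{\#}_{\lambda^+}$; re-indexing the latter (so that the added box is instead removed, $\lambda^-\to\lambda$) turns it into $\sum_{\lambda^-\to\lambda}\psi'_{\lambda/\lambda^-}(\a)\,c^{\lambda^-}_{\mu\nu}$ in front of $\jackP^{\#}_\lambda$. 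Since the $\jackP^{\#}_\lambda$ form a basis, I can equate coefficients of each $\jackP^{\#}_\lambda$; the two diagonal contributions combine into $(|\lambda|-|\nu|)\,c^{\lambda}_{\mu\nu}$, and rearranging yields exactly the stated formula. The case $|\lambda|=|\nu|$ forces $\nu=\lambda$ (as $\nu\subseteq\lambda$), which is the base case supplied by \cref{prop:shiftedLRspecialcase} and which is also what makes the division by $|\lambda|-|\nu|$ legitimate.

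The hard part is the key lemma. Since a shifted symmetric function of bounded degree is determined by its values on all partitions, I would verify it after evaluation at an arbitrary partition $\la$, where, using $\Ch_1(\la)=|\la|$, it reduces to the single scalar identity
\[
(|\la|-|\nu|)\,\jackP^{\#}_\nu(\la) \;=\; \sum_{\nu\to\nu^+}\psi'_{\nu^+/\nu}(\a)\,\jackP^{\#}_{\nu^+}(\la).
\]
This branching relation for the values of shifted Jack polynomials is the real content. I would attack it from the combinatorial definition \eqref{EqOkounkovCombJsh} specialized at $x_j=\la_j$: the factor $|\la|-|\nu|$ should emerge from a sum over the cells available for an added box, matching reverse tableaux of shape $\nu$ against those of shape $\nu^+$, with the $\psi'$ weights recording the change of shape. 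The anticipated obstacle is controlling the shift factors $-\arm'(s)+\leg'(s)/\a$ and the ratios of $\psi$-weights under the addition of a box; the transpose-and-$1/\a$ form of $\psi'$ is precisely what identifies this as a \emph{vertical}-strip (dual) Pieri phenomenon, whose homogeneous top-degree part is the classical Jack Pieri rule $p_1\,\jackP_\nu=\sum_{\nu\to\nu^+}\psi'_{\nu^+/\nu}(\a)\,\jackP_{\nu^+}$. Should the direct computation prove unwieldy, an alternative is to import the same branching identity from the difference-operator description of shifted Jack polynomials (for which the $\jackP^{\#}_\nu$ are the interpolation eigenfunctions); either route isolates the displayed evaluation identity as the crux.
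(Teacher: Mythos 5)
Your strategy---deriving the recursion by multiplying \eqref{eq:defLR} by $\Ch_1$ and using a Pieri-type rule for $\Ch_1\cdot\jackP^{\#}_\nu$---is sound, and the bookkeeping in your second paragraph is correct: equating coefficients of $\jackP^{\#}_\lambda$ (legitimate, since the $\jackP^{\#}_\lambda$ with $|\lambda|\le|\mu|+|\nu|+1$ form a basis in bounded degree) does yield exactly the stated formula. The reduction of the key lemma to an evaluation identity is also fine, since a shifted symmetric polynomial in $n$ variables vanishing at all partitions vanishes identically (partitions are Zariski dense). The one genuine gap is the step you yourself identify as the crux: the branching relation
\[
(|\la|-|\nu|)\,\jackP^{\#}_\nu(\la)\;=\;\sum_{\nu\to\nu^+}\psi'_{\nu^+/\nu}(\a)\,\jackP^{\#}_{\nu^+}(\la)
\]
is left unproven; you only sketch two possible attacks (a direct computation from \eqref{EqOkounkovCombJsh}, which you concede may be unwieldy, or an appeal to difference operators). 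Neither is carried out, so the proof is incomplete at its central point. The fix is short and uses a tool already available: by the Okounkov--Olshanski lemma quoted in the paper, $\jackP^{\#}_\nu(\la)/\jackP^{\#}_\la(\la)=\tfrac{1}{(|\la|-|\nu|)!}\sum_{T\in\SYT(\la/\nu)}\psi'_T(\a)$, and since $\psi'_T$ is multiplicative along the chain of shapes encoded by $T$, classifying each $T\in\SYT(\la/\nu)$ by its first box gives $\sum_{T\in\SYT(\la/\nu)}\psi'_T=\sum_{\nu\to\nu^+\subseteq\la}\psi'_{\nu^+/\nu}\sum_{T'\in\SYT(\la/\nu^+)}\psi'_{T'}$, which after dividing by $(|\la|-|\nu|-1)!$ is precisely the branching relation. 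You should not attempt the direct RSSYT computation.

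Once that is supplied, your route is genuinely different from the paper's and arguably cleaner. The paper first proves the closed formula $c^{\lambda}_{\mu\nu}=\sum_{\nu\subseteq\rho\subseteq\lambda}\jackP^{\#}_\mu(\rho)H(\nu,\rho)H'(\rho,\lambda)$ by induction, which requires an inclusion--exclusion cancellation identity (\eqref{eq:hproduct}, via the binomial theorem), and then combines \emph{two} one-box identities (one for adding a box to $\nu$, one for removing a box from $\lambda$) with that triple-product formula. Your approach needs only the single box-adding identity, packaged as a Pieri rule for $\Ch_1$, and gets the recursion directly by comparing coefficients; the box-removal sum appears automatically from re-indexing $\Ch_1\jackP^{\#}_\lambda$. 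What you lose is the intermediate triple-product formula itself, which the paper's method produces as a byproduct and which is of independent interest.
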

The reminder of the section is devoted to the proof of this proposition.
We start with some lemmas.
\begin{lemma}[See \cite{OkounkovOlshanskiShiftedJack}]
If $\mu \subseteq \nu$ then
\[
\frac{  P^{\#}_\mu(\nu) }{ P^{\#}_\nu(\nu) } = 
\frac{1}{(|\nu|-|\mu|)! } \sum_{T \in \SYT(\nu/\mu)} \psi'_T(\alpha),
\]
where the sum is taken over all \emph{standard} Young tableaux of shape $\nu/\mu$.
\end{lemma}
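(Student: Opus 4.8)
The plan is to prove the identity by induction on $k:=|\nu|-|\mu|$, reducing it to a single one-step recursion for the evaluations $\jackP^\#_\mu(\nu)$. Set $G_\mu(\nu):=\sum_{T\in\SYT(\nu/\mu)}\psi'_T(\a)$, so that the claim reads $\jackP^\#_\mu(\nu)=\frac{1}{k!}\,G_\mu(\nu)\,\jackP^\#_\nu(\nu)$. The opening observation is that $G$ itself satisfies an evident branching recursion: in a standard tableau of shape $\nu/\mu$ the largest entry $k$ sits in a removable corner of $\nu$ lying in $\nu/\mu$, and deleting it produces a standard tableau of some shape $\nu^-/\mu$ with $\nu^-\to\nu$. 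Since $\psi'_T(\a)=\psi_{T'}(1/\a)$ factorizes multiplicatively along the chain of shapes read off from $T$, this yields
\[
G_\mu(\nu)=\sum_{\nu^-\to\nu}\psi'_{\nu/\nu^-}(\a)\,G_\mu(\nu^-),\qquad G_\mu(\mu)=1,
\]
where terms with $\mu\not\subseteq\nu^-$ vanish, matching the extra-vanishing of $\jackP^\#_\mu$.

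Granting the single-step identity
\begin{equation}\tag{$\star$}
(|\nu|-|\mu|)\,\frac{\jackP^\#_\mu(\nu)}{\jackP^\#_\nu(\nu)}
=\sum_{\nu^-\to\nu}\psi'_{\nu/\nu^-}(\a)\,\frac{\jackP^\#_\mu(\nu^-)}{\jackP^\#_{\nu^-}(\nu^-)},
\end{equation}
a direct induction on $k$ gives the lemma: the base case $\nu=\mu$ is trivial, and in the inductive step one substitutes the hypothesis $\jackP^\#_\mu(\nu^-)=\frac{1}{(k-1)!}G_\mu(\nu^-)\jackP^\#_{\nu^-}(\nu^-)$ into $(\star)$, whereupon the factors $\jackP^\#_{\nu^-}(\nu^-)$ cancel and the $G$-recursion reassembles $G_\mu(\nu)$. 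It therefore remains only to establish $(\star)$ for all $\mu\subseteq\nu$; extra-vanishing lets us sum over all corners of $\nu$ without restriction.

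The heart of the matter is $(\star)$, which I would read as an operator identity on shifted symmetric functions. Clearing denominators with $\jackP^\#_\rho(\rho)=\a^{-|\rho|}H'_\rho$ from \eqref{eq:vanishing}, the right-hand side becomes a difference operator $\mathcal D$ with $(\mathcal D f)(\nu)=\sum_{\nu^-\to\nu}\psi'_{\nu/\nu^-}(\a)\,\a^{-1}\tfrac{H'_\nu}{H'_{\nu^-}}f(\nu^-)$, while the left-hand side is $p_1^{\#}\cdot\jackP^\#_\mu-|\mu|\,\jackP^\#_\mu$, because $\nu\mapsto|\nu|$ is the degree-one shifted symmetric function $p_1^{\#}$. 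Thus $(\star)$ asserts $M_{p_1^{\#}}=\mathcal N+\mathcal D$, where $M_{p_1^{\#}}$ is multiplication by $p_1^{\#}$ and $\mathcal N$ acts diagonally with eigenvalue $|\mu|$ on $\jackP^\#_\mu$: the diagonal part of multiplying by $p_1^{\#}$ in the shifted Jack basis is the size $|\mu|$, and the off-diagonal remainder is exactly $\mathcal D$. I expect this to be the main obstacle, for two reasons: one must verify that $\mathcal D$ sends shifted symmetric functions to shifted symmetric functions (a priori it is only a function on partitions), and one must pin down the coefficient $\a^{-1}H'_\nu/H'_{\nu^-}$.

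To discharge $(\star)$ I would take one of two routes. The conceptual one is to invoke the commuting family of difference operators diagonalized by the shifted Jack polynomials in \cite{OkounkovOlshanskiShiftedJack}: expressing $p_1^{\#}$ through the lowest such operator identifies the diagonal eigenvalue as $|\mu|$ and forces the remainder to be $\mathcal D$. The pragmatic one — presumably what the citation intends — is to quote the evaluation/binomial formula of \cite{OkounkovOlshanskiShiftedJack} and translate conventions, the transpose and the $1/\a$ in $\psi'_T(\a)=\psi_{T'}(1/\a)$ being precisely the $\a\leftrightarrow 1/\a$ duality built into their normalization. In both routes the remaining work is bookkeeping; the normalization is confirmed by the sanity check $\mathcal D\,1=p_1^{\#}$, i.e. $\sum_{\nu^-\to\nu}\psi'_{\nu/\nu^-}(\a)\,\a^{-1}H'_\nu/H'_{\nu^-}=|\nu|$, which one verifies holds already for single boxes.
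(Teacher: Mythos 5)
The paper gives no proof of this lemma: it is stated with a citation to Okounkov--Olshanski and used as a black box, so there is no internal argument to compare yours against. Judged on its own terms, your proposal has a correct and cleanly executed outer layer but a genuine gap at its core. The outer layer --- the branching recursion $G_\mu(\nu)=\sum_{\nu^-\to\nu}\psi'_{\nu/\nu^-}(\a)\,G_\mu(\nu^-)$ coming from deleting the largest entry of a standard tableau, the multiplicativity of $\psi'_T$ along the chain of shapes, the handling of extra-vanishing, and the induction on $k=|\nu|-|\mu|$ that converts the one-box identity $(\star)$ into the full lemma --- is all fine. But this reduction is essentially bookkeeping: $(\star)$ is the lemma restated in one-step form, and it carries all of the content. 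You identify it yourself as ``the heart of the matter'' and then do not prove it; you offer two routes (the difference-operator family of Okounkov--Olshanski, or their binomial/evaluation formula plus a change of conventions) and assert the rest is bookkeeping. Neither route is carried out, and both ultimately defer to the same reference the paper already cites for the whole statement, so no net progress is made.

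The gap is substantive, not cosmetic. The identity $(\star)$, rewritten as you do in operator form, asserts that the $\jackP^\#_\mu$ are eigenfunctions of a specific difference operator $M_{p_1^\#}-\mathcal D$ with eigenvalue $|\mu|$; equivalently, that the function $\nu\mapsto\sum_{\nu^-\to\nu}\psi'_{\nu/\nu^-}(\a)\,\a^{-1}(H'_\nu/H'_{\nu^-})\,\jackP^\#_\mu(\nu^-)$ agrees with $(|\nu|-|\mu|)\jackP^\#_\mu(\nu)$ on all partitions. You correctly flag the two obstacles --- showing that $\mathcal D$ preserves shifted symmetry, and pinning down the coefficient $\a^{-1}H'_\nu/H'_{\nu^-}$ --- but neither is resolved; the ``sanity check'' $\mathcal D\,1=p_1^\#$ is itself a nontrivial identity (the $\mu=\emptyset$ case of the lemma, a hook-product summation), and checking it ``for single boxes'' does not establish it. If you want a self-contained proof, the natural strategy within this paper's toolkit would be to verify that the right-hand side of $(\star)$, as a function of $\nu$, satisfies the vanishing characterization \eqref{eq:vanishing} that uniquely determines $(|\cdot|-|\mu|)\jackP^\#_\mu$ among shifted symmetric functions of the appropriate degree --- but that still requires proving shifted symmetry of $\mathcal D f$ first, which is precisely the hard analytic input supplied by the cited reference. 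As it stands, your argument is a correct reduction of the lemma to an equivalent unproved statement.
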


\medskip 

It will be convenient to define $H(\mu,\nu)$ as
\[
H(\mu,\nu) = \frac{  P^{\#}_\mu(\nu) }{ P^{\#}_\nu(\nu) } =
\frac{1}{(|\nu|-|\mu|)! } \sum_{T \in \SYT(\nu/\mu)} \psi'_T(\alpha)
\]
and $H'(\mu,\nu)= (-1)^{|\nu|-|\mu|} H(\mu,\nu)$.
It will be useful to keep in mind that $H(\mu,\mu)=H'(\mu,\mu)=1$.
\begin{lemma}
We have the formula
\begin{equation}\label{eq:StructureCsttsSumTripleProduct}
c_{\mu\nu}^\lambda = \sum_{\nu \subseteq \rho \subseteq \lambda } P^{\#}_\mu(\rho) H(\nu, \rho) H'(\rho, \lambda).
\end{equation}
\label{LemStructureCsttsSumTripleProduct}
\end{lemma}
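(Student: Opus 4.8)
The plan is to pin down the coefficients $c^{\lambda}_{\mu\nu}$ by a unitriangular system of linear equations obtained from specializing the defining identity \eqref{eq:defLR} at partitions, and then to invert that system. First I would evaluate $\jackP_\mu^\# \jackP_\nu^\# = \sum_\sigma c^{\sigma}_{\mu\nu} \jackP_\sigma^\#$ at an arbitrary partition $\rho$ with $\nu \subseteq \rho$. On the right-hand side the vanishing property \eqref{eq:vanishing} kills every term with $\sigma \not\subseteq \rho$, while the already-noted vanishing of $c^{\sigma}_{\mu\nu}$ unless $\sigma \supseteq \nu$ confines the surviving indices to $\nu \subseteq \sigma \subseteq \rho$. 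Dividing by $\jackP_\rho^\#(\rho) = \alpha^{-|\rho|}H'_\rho \neq 0$ and recognizing the ratios as the quantities $H$ yields
\[
P^{\#}_\mu(\rho)\, H(\nu,\rho) = \sum_{\nu \subseteq \sigma \subseteq \rho} c^{\sigma}_{\mu\nu}\, H(\sigma,\rho),
\]
valid for every $\rho \supseteq \nu$.

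Since $H(\rho,\rho)=1$, this system is unitriangular in the incidence algebra of Young's lattice and so determines the $c^{\sigma}_{\mu\nu}$ uniquely. To solve for $c^{\lambda}_{\mu\nu}$ I would apply $\sum_{\nu \subseteq \rho \subseteq \lambda}(\,\cdot\,)\,H'(\rho,\lambda)$ to both sides and interchange the two summations: the left-hand side becomes exactly the claimed expression $\sum_{\nu \subseteq \rho \subseteq \lambda} P^{\#}_\mu(\rho)\, H(\nu,\rho)\, H'(\rho,\lambda)$, while the right-hand side collapses to $c^{\lambda}_{\mu\nu}$ as soon as one knows the inversion identity
\[
\sum_{\sigma \subseteq \rho \subseteq \lambda} H(\sigma,\rho)\, H'(\rho,\lambda) = \delta_{\sigma,\lambda}.
\]
In other words, the entire lemma reduces to showing that $H$ and its signed companion $H'$ are mutually inverse in the incidence algebra, and I expect this to be the heart of the matter.

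The inversion identity has a clean combinatorial proof. Write $G(\sigma,\rho) = \sum_{T \in \SYT(\rho/\sigma)} \psi'_T(\alpha) = (|\rho|-|\sigma|)!\,H(\sigma,\rho)$. Using the multiplicativity of $\psi'_T$ along the saturated chain of shapes that records the entries of $T$, the quantity $G(\sigma,\rho)$ is a weighted sum over saturated chains from $\sigma$ to $\rho$. Because every such chain from $\sigma$ to $\lambda$ passes through exactly one partition of each intermediate size, the weights factor through any prescribed waypoint, giving $\sum_{|\rho|=|\sigma|+j} G(\sigma,\rho)\, G(\rho,\lambda) = G(\sigma,\lambda)$ for every fixed $j$. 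Substituting $H(\sigma,\rho)=G(\sigma,\rho)/(|\rho|-|\sigma|)!$ and writing $m=|\lambda|-|\sigma|$, the left-hand side of the inversion identity becomes
\[
\sum_{j=0}^{m} \frac{(-1)^{m-j}}{j!\,(m-j)!}\, G(\sigma,\lambda) = \frac{G(\sigma,\lambda)}{m!}\sum_{j=0}^{m}\binom{m}{j}(-1)^{m-j} = \frac{G(\sigma,\lambda)}{m!}\,(1-1)^m,
\]
which vanishes for $m \geq 1$ and equals $1$ for $m=0$, exactly as required.

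The only points demanding care are the two inputs to this last computation: the multiplicativity of $\psi'_T$ over the chain of intermediate shapes, which follows from the standard branching description of the Jack $\psi$-coefficients applied to the transpose at parameter $1/\alpha$, and the waypoint factorization of $G$. Both are routine once stated precisely, and I would dispatch them in a short preliminary remark. As an alternative to this self-contained route, the fact that the evaluation matrix $\bigl(P^{\#}_\sigma(\rho)\bigr)$ is inverted by its sign-twist is part of the binomial-formula theory of Okounkov–Olshanski and could simply be cited; I would nonetheless favour the direct argument above. Finally, taking $\nu=\lambda$ in the resulting formula recovers $c^{\lambda}_{\mu\lambda} = P^{\#}_\mu(\lambda)$ from \cref{prop:shiftedLRspecialcase}, a reassuring consistency check.
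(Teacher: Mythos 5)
Your proposal is correct and follows essentially the same route as the paper: both evaluate the defining relation \eqref{eq:defLR} at partitions, use the Knop--Sahi vanishing to obtain the unitriangular system $P^{\#}_\mu(\rho)\,H(\nu,\rho)=\sum_{\nu\subseteq\sigma\subseteq\rho}c^{\sigma}_{\mu\nu}\,H(\sigma,\rho)$, and reduce everything to the incidence-algebra identity $\sum_{\sigma\subseteq\rho\subseteq\lambda}H(\sigma,\rho)\,H'(\rho,\lambda)=\delta_{\sigma\lambda}$, which is the paper's \eqref{eq:hproduct} up to an overall sign convention and is proved there by the identical SYT-concatenation plus binomial-theorem argument. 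The only difference is organizational: the paper unwinds the triangular system by induction on $|\lambda|-|\nu|$, whereas you invert it in one step.
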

\begin{proof}
We use induction over $|\lambda|-|\nu|$. The base case $\lambda=\nu$ is given by \cref{prop:shiftedLRspecialcase}.
Substituting $\xvec = \lambda$ in the definition of $c_{\mu\nu}^\lambda$ (\cref{eq:defLR}) and dividing both sides 
with $P^{\#}_\lambda(\lambda)$ together with previous lemma, we obtain

\begin{equation}
 P^{\#}_\mu(\lambda) \frac{P^{\#}_\nu(\lambda) }{P^{\#}_\lambda(\lambda)}  = \sum_{\sigma \subseteq \lambda } c_{\mu\nu}^\sigma  H(\sigma, \lambda).
\end{equation}
Rewriting this now gives 
\begin{equation}\label{eq:rec1}
c_{\mu\nu}^\lambda = P^{\#}_\mu(\lambda)H(\nu, \lambda) - \sum_{\sigma \subsetneq\lambda } c_{\mu\nu}^\sigma  H(\sigma, \lambda).
\end{equation}
The induction hypothesis now allows us to express \eqref{eq:rec1} as 
\begin{align}
c_{\mu\nu}^\lambda &= P^{\#}_\mu(\lambda)H(\nu, \lambda) - 
\sum_{\sigma,\rho:  \mu \subseteq \rho \subseteq \sigma \subsetneq\lambda } 
 P^{\#}_\mu(\rho) H(\nu, \rho) H'(\rho, \sigma)
 H(\sigma, \lambda) \nonumber \\
&= P^{\#}_\mu(\lambda)H(\nu, \lambda) - 
\sum_{\rho: \mu \subseteq \rho \subsetneq\lambda }
 P^{\#}_\mu(\rho) H(\nu, \rho)
\left[ \sum_{\sigma: \rho \subseteq \sigma \subsetneq\lambda } 
 H'(\rho, \sigma) H(\sigma, \lambda)\right] .
 \label{eq:Tech}
\end{align}
It now suffices to prove the following claim: For any $\rho \subseteq \lambda$,
\begin{equation}\label{eq:hproduct}
\sum_{\sigma: \rho \subseteq \sigma \subseteq \lambda } 
 H'(\rho, \sigma) H(\sigma, \lambda) = 0.
\end{equation}
Indeed, if \eqref{eq:hproduct} holds, we can replace the sum in brackets in 
\eqref{eq:Tech} by $- H'(\rho,\la)$ and
we get \eqref{eq:StructureCsttsSumTripleProduct} 
(the first term in \eqref{eq:Tech} correspond to the summand $\rho=\la$ 
in \eqref{eq:StructureCsttsSumTripleProduct}).
\smallskip

Let us now prove \eqref{eq:hproduct}.
Using the definition of $H$, we have that the left-hand side of \eqref{eq:hproduct} is given by
\begin{equation*}
\sum_{\sigma: \rho \subseteq \sigma \subseteq \lambda } \frac{ (-1)^{|\sigma|-|\rho|} }{(|\sigma|-|\rho|)! (|\lambda| - |\sigma|)!}
 \left( 
 \sum_{T_1 \in \SYT(\sigma/\rho)} \psi'_{T_1}(\alpha)
\right)
 \left( 
 \sum_{T_2 \in \SYT(\lambda/\sigma)} \psi'_{T_2}(\alpha)
\right)
\end{equation*}
This triple sum over $\sigma$, $T_1$ and $T_2$ can be rewritten as follows.
First, we sum over standard Young tableaux $T$ of shape $\lambda/\rho$.
This tableau $T$ correspond to a sequence $\rho = \sigma^0 \to \sigma^1 \to \dots \to \sigma^l = \lambda$
which is the ``concatenation'' of $T_1$ and $T_2$.
Then we should sum over $\sigma$ that appear in this sequence,
that is $\sigma=\sigma^k$ for some $k$ between $0$ and $|\lambda|-|\rho|$.
Thus, we get:
\begin{multline*}
\sum_{\rho \subseteq \sigma \subseteq \lambda } 
 H'(\rho, \sigma) H(\sigma, \lambda) = 
\sum_{T \in \SYT(\lambda/\rho) } \psi'_T(\alpha) 
\left[ \sum_{\sigma=\sigma^k \in T}
\frac{ (-1)^{|\sigma|-|\rho|} }{(|\sigma|-|\rho|)! (|\lambda|-|\sigma|)!} \right]\\
=
\left(\sum_{T \in \SYT(\lambda/\rho) } \psi'_T(\alpha) \right) \,
\left(\sum_{k=0}^{|\lambda|-|\rho|}
\frac{ (-1)^{k} }{k! (|\lambda|-|\rho|-k)!}a\right)
\end{multline*}
But, from the binomial theorem, we know that
\[
\sum_{k=0}^{|\lambda|-|\rho|}
\frac{ (-1)^{k} }{k! (|\lambda|-|\rho|-k)!} =0.
\]
Thus \eqref{eq:hproduct} is proved and this completes the proof of 
\cref{LemStructureCsttsSumTripleProduct} by induction.
\end{proof}

We are now ready to prove \cref{prop:structureConstantRecursion}.
\begin{proof}[Proof of \cref{prop:structureConstantRecursion}]
 We fix $\rho$ with $\nu \subseteq \rho \subseteq \lambda$.
 First observe that, using the definition of $H$ and $H'$ we have
\begin{align*}
\sum_{\nu \to \nu^+} \psi'_{\nu^+/\nu} H(\nu^+, \rho) &= (|\rho|-|\nu|) H(\nu, \rho) \text{ and } \\
\sum_{\lambda^- \to \lambda } \psi'_{\lambda/\lambda^-} H'(\rho, \lambda^-) &=
-(|\lambda|-|\rho|) H'(\rho, \lambda).
\end{align*}
Multiplying the first by $H'(\rho,\la)$, the second by $H(\nu,\rho)$
and taking the difference gives
\begin{multline}\label{eq:claim}
  (|\lambda|-|\nu|) H(\nu, \rho) H'(\rho, \lambda) = \\
 \sum_{\nu \to \nu^+} \psi'_{\nu^+/\nu}H(\nu^+, \rho)H'(\rho, \lambda) -
 \sum_{\lambda^- \to \lambda }  \psi'_{\lambda/\lambda^-} H(\nu, \rho)H'(\rho, \lambda^-).
 \end{multline}
Now, simply multiply both sides of the above equation \eqref{eq:claim} with $P^{\#}_\mu(\rho)$, 
and sum them
for all $\rho$ where $\mu \subseteq \rho \subseteq \lambda$.
Using Lemma~\ref{LemStructureCsttsSumTripleProduct} then gives the recursion in  \cref{prop:structureConstantRecursion}.
\end{proof}

\section{Some particular cases}
\label{sec:part}
A weaker form of the conjecture asserts that for any $\alpha>0$ (and any triple
of partitions $\lambda,\mu,\nu$), the coefficient $g^{\lambda}_{\mu,\nu}$,
or equivalently $c^{\lambda}_{\mu,\nu}$, is nonnegative.
This is obviously a strictly weaker statement since a polynomial may take nonnegative
values on nonnegative real numbers but have some negative coefficients,
e.g. $(\alpha-1)^2$.
In this section, we prove that this weaker property holds
whenever $|\lambda|-|\mu| \le 1$.

\begin{proposition}\label{prop:nonnegcase0}
For fixed $\alpha \geq 0$, we have that $g^{\lambda}_{\mu\lambda} \geq 0$.
\end{proposition}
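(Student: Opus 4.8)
The plan is to reduce $g^{\lambda}_{\mu\lambda}$ to a product of factors whose signs are transparent for $\a \ge 0$. First I would dispose of the trivial case: if $\mu \not\subseteq \lambda$, then $c^{\lambda}_{\mu\lambda}=0$ by the vanishing property recalled just before \cref{prop:shiftedLRspecialcase}, so $g^{\lambda}_{\mu\lambda}=0\ge 0$ and there is nothing to prove. Hence I may assume $\mu \subseteq \lambda$. By \cref{prop:shiftedLRspecialcase}, $c^{\lambda}_{\mu\lambda}=\jackP^{\#}_\mu(\lambda)$, so
\[
g^{\lambda}_{\mu\lambda}=H_\mu\,H_\lambda\,H'_\lambda\,\jackP^{\#}_\mu(\lambda).
\]
Next I would use the definition of $H(\mu,\lambda)$ together with the normalization $\jackP^{\#}_\lambda(\lambda)=\a^{-|\lambda|}H'_\lambda$ coming from \eqref{eq:vanishing}, which give
\[
\jackP^{\#}_\mu(\lambda)=\jackP^{\#}_\lambda(\lambda)\,H(\mu,\lambda)
=\frac{\a^{-|\lambda|}H'_\lambda}{(|\lambda|-|\mu|)!}\sum_{T\in\SYT(\lambda/\mu)}\psi'_T(\a).
\]
Substituting this back, I obtain the key factored expression
\[
g^{\lambda}_{\mu\lambda}=\frac{\a^{-|\lambda|}\,H_\mu\,H_\lambda\,(H'_\lambda)^2}{(|\lambda|-|\mu|)!}\sum_{T\in\SYT(\lambda/\mu)}\psi'_T(\a).
\]

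It then remains to check that each factor on the right is nonnegative for $\a>0$. The hook products $H_\mu$ and $H_\lambda$ are products of terms $\a\,\arm+\leg+1\ge 1$, hence strictly positive; $(H'_\lambda)^2$ is a square; and $\a^{-|\lambda|}$ and $(|\lambda|-|\mu|)!$ are positive. The only substantive point is the nonnegativity of $\psi'_T(\a)$. Here I would unwind the definition $\psi'_T(\a)=\psi_{T'}(1/\a)$: since $T$ is standard, so is its transpose $T'$, and $\psi_{T'}(1/\a)$ factors as a product of single-box contributions $\psi_{\sigma^+/\sigma}(1/\a)$, each given by \eqref{eq:psidef2} evaluated at the positive argument $\beta=1/\a$. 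In that formula every factor in the numerator and denominator is of the form $\beta(\arm+1)+\leg$ or $\beta\,\arm+\leg+1$ with $\arm,\leg\ge 0$, hence strictly positive when $\beta>0$; so each ratio, and therefore $\psi'_T(\a)$, is strictly positive. Summing over the (nonempty) set $\SYT(\lambda/\mu)$ yields a positive total, and the whole product is $\ge 0$.

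The one genuinely delicate point — and what I regard as the main obstacle — is the boundary value $\a=0$, where the factor $\a^{-|\lambda|}$ has a pole and $g^{\lambda}_{\mu\lambda}$ need not even be finite (compare the third example following \cref{conj:laurentjackLRcoeffs}). I would resolve this by continuity: the argument above establishes $g^{\lambda}_{\mu\lambda}\ge 0$ on all of $(0,\infty)$, and since $g^{\lambda}_{\mu\lambda}$ is a Laurent polynomial in $\a$ by the polynomiality result of \cref{sec:poly}, at any $\a=0$ where it is finite its value coincides with the right-hand limit and is therefore $\ge 0$ as well. This settles the statement for every $\a\ge 0$ at which $g^{\lambda}_{\mu\lambda}$ is defined.
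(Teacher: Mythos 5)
Your argument is correct, but it takes a genuinely different route from the paper. The paper stays entirely inside the combinatorial formula \eqref{EqOkounkovCombJsh}: after reducing to $c^{\lambda}_{\mu\lambda}=\jackP^{\#}_\mu(\lambda)$ via \cref{prop:shiftedLRspecialcase}, it shows that \emph{each individual summand} of the reverse-tableau expansion of $\jackP^{\#}_\mu(\lambda)$ is nonnegative, the key observation being that if some content factor $\lambda_{T(i,j)}-(j-1)+(i-1)/\a$ is negative, then the first-row sequence $\lambda_{T(1,j)}-j+1,\dots,\lambda_{T(1,1)}$ must pass through $0$, so the product vanishes. You instead invoke the Okounkov--Olshanski evaluation lemma to write $\jackP^{\#}_\mu(\lambda)=\a^{-|\lambda|}H'_\lambda\cdot\frac{1}{(|\lambda|-|\mu|)!}\sum_{T\in\SYT(\lambda/\mu)}\psi'_T(\a)$, after which every factor of $g^{\lambda}_{\mu\lambda}$ is manifestly positive for $\a>0$ (each single-box $\psi$ in \eqref{eq:psidef2} being a ratio of positive quantities). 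Your route is shorter, yields the stronger conclusion of \emph{strict} positivity when $\mu\subseteq\lambda$ and $\a>0$, and treats the boundary $\a=0$ more carefully than the paper does (the paper's own formula is singular there and it does not comment on this); the cost is that you lean on the quoted lemma of \cite{OkounkovOlshanskiShiftedJack} rather than only on the definition. One further trade-off worth noting: the paper's term-by-term analysis of the content products is reused verbatim in the proof of \cref{prop:nonnegcase1} (the case $|\lambda|-|\nu|=1$), where one must compare the evaluations at $\lambda$ and at $\nu$ inside a common tableau sum; your factorization does not obviously feed into that second argument.
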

\begin{proof}
    Using \cref{prop:shiftedLRspecialcase} and \cref{EqOkounkovCombJsh}, it is enough to show that
\begin{equation}\label{eq:shiftedproduct}
\sum_{T \in \RSSYT(\mu,n)} \psi_T 
 \left(\prod_{s \in \mu} ( \lambda_{T(s)} - \arm'(s) + \leg'(s)/\alpha) \right)
\end{equation}
is nonnegative. Since $\psi_T$ is nonnegative for all $\alpha>0$, it is enough to show that for each fixed tableau $T$
in the sum above, the product is either positive or $0$.

Fix $T$ and assume that for the box $s \in \mu$, $(\lambda_{T(s)} - \arm'(s) + \leg'(s)/\alpha)<0$.
If $s=(i,j)$, this means
\[
\lambda_{T(i,j)} - (j-1) + (i-1)/\alpha<0.
\]
Since $T$ is a reverse tableau, and $\lambda$ is a partition, $\lambda_{T(i,j)} \geq \lambda_{T(1,j)}$.
This implies that $\lambda_{T(1,j)} - (j-1) <0$. 

% Now, as $j$ decreases, $\lambda_{T(1,j)}$ is either constant, or decreases.
% Hence, the set $S=\{ \lambda_{T(1,l)} - (l-1) \}_{l = 1,\dots,j}$, include all integers
% between $\lambda_{T(1,0)}$ and $\lambda_{T(1,j)} - (j-1)$.
% Since $\lambda_{T(1,0)} \geq 0$, we can be sure that $S$ contains $0$.
Now, we have that $ \lambda_{T(1,l)} \geq \lambda_{T(1,l-1)}$ for any $l$ such that $2\leq l \leq \mu_1$.
As a consequence, the sequence
\[
\lambda_{T(1,j)} - j + 1,\ \lambda_{T(1,j-1)} - j + 2,\ \dots,\ \lambda_{T(1,1)}.
\]
can increase by at most one in each step. Since $\lambda_{T(1,1)} \geq 0$,
we know that at least one of these numbers is $0$.

Thus, every product in \eqref{eq:shiftedproduct} which has a negative factor, also include a factor which is zero.
Hence, $g^{\lambda}_{\mu\lambda} \geq 0$ for fixed $\alpha \geq 0$.
\end{proof}

\begin{proposition}\label{prop:nonnegcase1}
For fixed $\alpha \geq 0$ and $|\lambda|-|\nu|=1$ we have that $c^{\lambda}_{\mu\nu} \geq 0$.
\end{proposition}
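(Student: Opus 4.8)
The plan is to use the recursion formula from \cref{prop:structureConstantRecursion} applied to the case $|\lambda|-|\nu|=1$, which collapses the prefactor $\tfrac{1}{|\lambda|-|\nu|}$ to $1$ and gives
\[
c^{\lambda}_{\mu\nu} = \sum_{\nu \to \nu^+} \psi'_{\nu^+/\nu}(\a)\, c^{\lambda}_{\mu\nu^+}
 - \sum_{\lambda^- \to \lambda} \psi'_{\lambda/\lambda^-}(\a)\, c^{\lambda^-}_{\mu\nu}.
\]
Since $|\lambda|=|\nu|+1$ and $\nu \subseteq \lambda$, the partition $\lambda$ is itself obtained from $\nu$ by adding a single box, so in the first sum the only term that can survive is $\nu^+=\lambda$ (any other $\nu^+$ fails to be contained in $\lambda$, forcing $c^{\lambda}_{\mu\nu^+}=0$ by the containment vanishing stated just before \cref{prop:shiftedLRspecialcase}). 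By \cref{prop:shiftedLRspecialcase}, $c^{\lambda}_{\mu\lambda}=\jackP^{\#}_\mu(\lambda)$, which is nonnegative for $\alpha \geq 0$ by \cref{prop:nonnegcase0}. So the first sum is manifestly a product of nonnegative quantities, using that $\psi'_{\lambda/\nu}(\a)\geq 0$ for $\alpha>0$ (this follows from $\psi'_T(\a)=\psi_{T'}(1/\a)$ together with the nonnegativity of $\psi$ already invoked in the proof of \cref{prop:nonnegcase0}).

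The content therefore lies entirely in the second sum: I must show that
\[
\sum_{\lambda^- \to \lambda} \psi'_{\lambda/\lambda^-}(\a)\, c^{\lambda^-}_{\mu\nu} \;\le\; c^{\lambda}_{\mu\lambda}\,\psi'_{\lambda/\nu}(\a),
\]
i.e. that the positive contribution dominates the subtracted one. Each term $c^{\lambda^-}_{\mu\nu}$ here has $|\lambda^-|=|\nu|$, so either $\lambda^-=\nu$ (giving $c^{\nu}_{\mu\nu}=\jackP^{\#}_\mu(\nu)\geq 0$) or $\lambda^-$ is a partition of the same size as $\nu$ but different from it, in which case $c^{\lambda^-}_{\mu\nu}$ need not be signed a priori. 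The right approach is to evaluate everything directly through \cref{LemStructureCsttsSumTripleProduct}: for $|\lambda|-|\nu|=1$ the sum over intermediate $\rho$ with $\nu\subseteq\rho\subseteq\lambda$ has only two terms, $\rho=\nu$ and $\rho=\lambda$, yielding the closed form
\[
c^{\lambda}_{\mu\nu} = \jackP^{\#}_\mu(\nu)\,H(\nu,\lambda) + \jackP^{\#}_\mu(\lambda)\,H'(\nu,\lambda)
 = \jackP^{\#}_\mu(\lambda)\,\psi'_{\lambda/\nu}(\a) - \jackP^{\#}_\mu(\nu)\,\psi'_{\lambda/\nu}(\a),
\]
after substituting $H(\nu,\lambda)=\psi'_{\lambda/\nu}(\a)$ (a single-box skew shape has a unique standard tableau) and $H'(\nu,\lambda)=-H(\nu,\lambda)$. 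Thus $c^{\lambda}_{\mu\nu}=\psi'_{\lambda/\nu}(\a)\bigl(\jackP^{\#}_\mu(\lambda)-\jackP^{\#}_\mu(\nu)\bigr)$, and since $\psi'_{\lambda/\nu}(\a)\geq 0$, the whole problem reduces to the single inequality $\jackP^{\#}_\mu(\lambda)\geq \jackP^{\#}_\mu(\nu)$ whenever $\lambda$ is obtained from $\nu$ by adding one box.

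The main obstacle is therefore establishing this monotonicity of shifted Jack evaluations along a one-box addition. My plan is to expand both $\jackP^{\#}_\mu(\lambda)$ and $\jackP^{\#}_\mu(\nu)$ using the tableau formula \eqref{EqOkounkovCombJsh} and compare them term by term over $T\in\RSSYT(\mu,n)$, exploiting that $\lambda$ and $\nu$ differ in exactly one part. Writing $\lambda=\nu+\me_r$ for the row $r$ where the box is added, only those factors in the product $\prod_{s\in\mu}(x_{T(s)}-\arm'(s)+\leg'(s)/\a)$ in which $T(s)=r$ change, and they change by increasing $\lambda_r$ by one. The strategy is to argue, as in the proof of \cref{prop:nonnegcase0}, that each individual product is monotone nondecreasing under this shift: a factor either stays nonnegative and weakly increases, or if it were to become problematic it is forced (by the same row/column inequalities for reverse tableaux that produced a zero factor in \cref{prop:nonnegcase0}) to be accompanied by a vanishing factor, so the product cannot decrease. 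The delicate point, and the one I expect to require care, is controlling the sign and magnitude of the product when several factors in the same term are affected and some are negative; the cleanest route may be to group the tableaux so that negative products are matched against zero or positive ones, thereby showing $\jackP^{\#}_\mu(\lambda)-\jackP^{\#}_\mu(\nu)\geq 0$ summand by summand and completing the proof.
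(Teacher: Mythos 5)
Your route coincides with the paper's up to the last step: the recursion (or, as you do it, \cref{LemStructureCsttsSumTripleProduct} with the two admissible terms $\rho=\nu$ and $\rho=\lambda$) collapses to $c^{\lambda}_{\mu\nu}=\psi'_{\lambda/\nu}(\a)\bigl(\jackP^{\#}_\mu(\lambda)-\jackP^{\#}_\mu(\nu)\bigr)$, and everything reduces to the monotonicity $\jackP^{\#}_\mu(\lambda)\ge\jackP^{\#}_\mu(\nu)$ when $\lambda$ is $\nu$ plus one box. Two small remarks on this part: the terms $c^{\lambda^-}_{\mu\nu}$ with $\lambda^-\neq\nu$ that worry you in your second paragraph vanish by the very containment argument you applied to the first sum ($|\lambda^-|=|\nu|$ and $\nu\subseteq\lambda^-$ force $\lambda^-=\nu$), so the recursion already yields the closed form directly; and in your application of the Lemma the factors are attached to the wrong summands --- the $\rho=\nu$ term is $\jackP^{\#}_\mu(\nu)H(\nu,\nu)H'(\nu,\lambda)$ and the $\rho=\lambda$ term is $\jackP^{\#}_\mu(\lambda)H(\nu,\lambda)H'(\lambda,\lambda)$ --- although your final factored identity comes out right.

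The genuine gap is that the inequality $\jackP^{\#}_\mu(\lambda)\ge\jackP^{\#}_\mu(\nu)$ is only sketched: you flag the case where several factors of a single tableau term change and some are negative as ``delicate'' and propose an unspecified matching of tableaux, but this is precisely the substance of the proposition and it is left unproved. The paper closes it per tableau, with no grouping across tableaux. Writing $b(s)=\arm'(s)-\leg'(s)/\a$, each tableau contributes $\prod_{s\in\mu}(\lambda_{T(s)}-b(s))-\prod_{s\in\mu}(\nu_{T(s)}-b(s))$, and the key observation, obtained from the first-row telescoping argument in the proof of \cref{prop:nonnegcase0}, is a dichotomy: if any factor $\nu_{T(s)}-b(s)$ is negative, then \emph{both} products contain a factor equal to zero, so that tableau contributes $0-0=0$; otherwise all factors of the $\nu$-product, hence also of the $\lambda$-product (since $\lambda_{T(s)}\ge\nu_{T(s)}$), are nonnegative, and after pulling out the common nonnegative factors over boxes with $T(s)\neq i$ (where $i$ is the unique row in which $\lambda$ and $\nu$ differ) one is left with $\prod(\nu_i+1-b(s))-\prod(\nu_i-b(s))$ with all factors nonnegative, which is manifestly $\ge 0$. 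Without this dichotomy the term-by-term comparison does not close, so the proof as written is incomplete at its decisive step.
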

\begin{proof}
Using the recursion established above, we have that
\[
c^{\lambda}_{\mu\nu} = \psi'_{\lambda/\nu} \left( c^{\lambda}_{\mu\lambda} - c^{\nu}_{\mu\nu}  \right)
\]
if $|\lambda|-|\nu|=1$. Since $ \psi'_{\lambda/\nu} \geq 0$ when $\alpha>0$, we can use \cref{prop:shiftedLRspecialcase}
and \cref{EqOkounkovCombJsh}
and conclude that it suffices to show that the following is nonnegative, where the sum is over all reverse
semi-standard Young tableaux of shape $\mu$:
\begin{equation}\label{eq:nonnegcase1eq}
\sum_{T} \psi_T \left( 
\prod_{s \in \mu} (\lambda_{T(s)} -  \arm'(s) + \leg'(s)/\alpha)
-
\prod_{s \in \mu} (\nu_{T(s)} -  \arm'(s) + \leg'(s)/\alpha)
\right).
\end{equation}
The reasoning in \cref{prop:nonnegcase0}, implies that if any of the factors in the second product is negative,
then both products contains at least one factor which is zero.
Thus, we can assume that all factors in both products are nonnegative.

Since $\lambda$ and $\nu$ only differ in one coordinate, say $i$, we can factor the difference of the product,
and use that $\lambda_i = \nu_i+1$:
\[
\left( \prod_{\substack{s \in \mu \\ T(s) \neq i}} \nu_{T(s)} -  b(s) \right)
\left(
\prod_{\substack{s \in \mu \\ T(s) = i}} ( \nu_{i} + 1 -  b(s) )
-
\prod_{\substack{s \in \mu \\ T(s) = i}} (\nu_{i}  -  b(s))
\right)
\]
where $b(s) = \arm'(s) + \leg'(s)/\alpha$. 
We assumed that all factors are nonnegative, so the first product is nonnegative.
This fact also implies that the difference in the second parenthesis is nonnegative.
\end{proof}

\begin{remark}
The above technique do not generalize directly to $|\lambda|-|\nu| \geq 2$, 
since some tableaux give negative contributions in that case.
\end{remark}
\bigskip

Let us come back to the original conjecture, stating that
$g^{\lambda}_{\mu,\nu}$ is a Laurent polynomial in $\a$ 
with nonnegative coefficients.
Even in the case $\nu=\la$, where we have
$H_\mu H_\la H'_\la P_\mu^\#(\la)$, we are not aware
of a formula for shifted Jack polynomials explaining the (Laurent) polynomiality
of this quantity.
In \cite{AF18}, we make the following stronger conjecture
(supported by numerical data):
\begin{conjecture} The expression
    $\a^{\ell(\mu)-1} H_\mu P_\mu^\#(x_1,\dots,x_n)$ 
    has nonnegative coefficient in the basis
    \[\bigg( \a^c (x_1-x_2)_{b_1} \cdots (x_{n-1}-x_n)_{b_{n-1}}
    (x_n)_{b_n} \bigg)_{c,b_1,\dots,b_n \ge 0},\]
where $(x)_b$ is as usual the $b^{\text{th}}$ falling power of $x$,
that is $x(x-1) \dotsm (x-b+1)$.
\end{conjecture}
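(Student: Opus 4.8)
The plan is to pass to the \emph{difference variables} $y_i \coloneqq x_i - x_{i+1}$ for $1 \le i \le n-1$ and $y_n \coloneqq x_n$, under which $x_a = y_a + y_{a+1} + \dots + y_n$ and a partition evaluation $x = \la$ corresponds exactly to $y_i = \la_i - \la_{i+1} \ge 0$. Writing $F(y_1,\dots,y_n) \coloneqq \a^{\ell(\mu)-1} H_\mu \jackP^{\#}_\mu(x)$, the target basis becomes the tensor falling-factorial basis $\prod_a (y_a)_{b_a}$ (times powers of $\a$), whose dual functionals are the iterated forward differences $\tfrac{1}{\prod_a b_a!}\prod_a \Delta_{y_a}^{b_a}\big|_{y=0}$. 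Thus, up to the factor $1/\prod_a b_a!$, the coefficient of $\a^c\prod_a (y_a)_{b_a}$ is the coefficient of $\a^c$ in
\[
\sum_{0\le k_a \le b_a}\Big(\prod_a(-1)^{b_a-k_a}\tbinom{b_a}{k_a}\Big)\,F(k_1,\dots,k_n),
\]
where each $F(k)$ is $\a^{\ell(\mu)-1}H_\mu$ times the value of $\jackP^{\#}_\mu$ at the partition whose consecutive differences are the $k_a$. By the argument proving \cref{prop:nonnegcase0}, every such value, hence every $F(k)$, is nonnegative for $\a \ge 0$; the conjecture asks for the much stronger statement that all these iterated forward differences are nonnegative --- i.e.\ that $F$ is \emph{absolutely monotonic} in the $y_a$ --- and moreover that each difference is a polynomial in $\a$ with nonnegative coefficients.

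The first step would be to show that $F$ is genuinely a polynomial in $\a$ (no negative powers), so that the ``$c \ge 0$'' part of the basis is the correct normalization; I expect this to follow by combining the tableau formula \eqref{EqOkounkovCombJsh} with the polynomiality already established in \cref{sec:poly}. The engine of the positivity argument would be the observation that the class of functions on $\setZ_{\ge 0}^n$ that are absolutely monotonic and have nonnegative $\a$-coefficients is closed under addition and multiplication: for products one uses the Leibniz rule $\Delta(fg) = (\Delta f)\,g(\cdot+1) + f\,\Delta g$, which keeps every iterated difference nonnegative as soon as $f,g$ and all their differences are. Rewriting $F$ through \eqref{EqOkounkovCombJsh} as $\a^{\ell(\mu)-1}\sum_T H_\mu \psi_T(\a)\prod_{s=(i,j)}\big(\sum_{a \ge T(s)} y_a + c_{ij}\big)$ with $c_{ij} = (i-1)/\a - (j-1)$, the plan is to build $F$ out of these affine factors while remaining inside the closed class.

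The hard part --- and the reason this remains a conjecture --- is precisely that the individual affine factors are \emph{not} absolutely monotonic: the form $\sum_{a\ge T(s)} y_a + c_{ij}$ lies in the class only when its constant term $c_{ij}$ is nonnegative, whereas $c_{ij} = (i-1)/\a - (j-1)$ is negative for boxes with $j \ge 2$ once $\a$ is large. Hence any factor-by-factor or tableau-by-tableau argument must fail, and the positivity has to be extracted from cancellation, exactly as in the proof of \cref{prop:nonnegcase0}, where a negative factor in a row forces, through the reverse and partition conditions, a vanishing factor further left in the first row. The decisive and still-missing ingredient is to upgrade that mechanism from the level of \emph{evaluations} (the $0$th difference) to the level of \emph{all} iterated forward differences --- for instance by exhibiting a sign-reversing involution on pairs (tableau, choice of $k$) that cancels the negative contributions, or by finding an independent, manifestly positive combinatorial model for $\jackP^{\#}_\mu(\la)$ in the difference coordinates. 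The \cref{prop:nonnegcase1} remark is the signal that no presently known cancellation argument reaches past the first difference, which is why I expect this step to be the genuine obstacle, compounded by the need to track $\a$-positivity simultaneously.
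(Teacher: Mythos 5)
The statement you were asked to prove is an \emph{open conjecture}: the paper states it without proof (it is attributed to \cite{AF18} and supported only by numerical data), so there is no argument of the authors to measure yours against. Your proposal is, by your own admission, not a proof either: after a reformulation, the entire positivity mechanism is deferred to a ``decisive and still-missing ingredient.'' That is the gap, and it is the whole of the mathematical content of the conjecture. What you have is a correct dual restatement, not an argument.

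The reformulation itself is sound and worth keeping. In the coordinates $y_a = x_a - x_{a+1}$, $y_n = x_n$, the target family does become the tensor falling-factorial system $\prod_a (y_a)_{b_a}$, and the coefficient of $\alpha^c\prod_a (y_a)_{b_a}$ is correctly extracted by the iterated forward differences at the origin, i.e.\ by the alternating sums over $0 \le k_a \le b_a$ that you display; partition evaluations correspond exactly to $y \in \setZ_{\ge 0}^n$. Your diagnosis of why the naive closure argument fails is also accurate: the affine factors $\sum_{a \ge T(s)} y_a + (i-1)/\alpha - (j-1)$ have negative constant term whenever $j \ge 2$ (for $i=1$ this holds for every $\alpha>0$), so they are not individually in the closed class, and the cancellation used to prove \cref{prop:nonnegcase0} --- a negative factor forcing a vanishing factor elsewhere in the same product --- is an argument about \emph{evaluations} only; the remark following \cref{prop:nonnegcase1} already signals that it does not survive even a second difference. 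Two smaller cautions: the claim that polynomiality of $F$ in $\alpha$ ``follows from \cref{sec:poly}'' is optimistic, since that section only controls $\alpha^{|\mu|+|\nu|-|\lambda|-2} g^{\lambda}_{\mu\nu}$ and does not obviously yield that $\alpha^{\ell(\mu)-1}H_\mu P^{\#}_\mu$ has no negative powers of $\alpha$ coefficientwise; and absolute monotonicity on $\setZ_{\ge 0}^n$ plus $\alpha$-positivity of each difference is what the conjecture asserts, so restating it in that language, however clarifying, is not progress toward establishing it.
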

This suggests that an analogue of Knop--Sahi combinatorial formula
\cite{KnopSahiCombinatoricsJack} could exist for shifted Jack polynomials.

\end{document}